\newtheorem{theorem}{Theorem}
\newtheorem*{theorem*}{Theorem}
\newtheorem{lemma}{Lemma}
\theoremstyle{definition}
\newtheorem{definition}{Definition}
\title[Automatic Conjecture and Proof of Families of Continued Fractions]
{Automatic Conjecturing and Proving of Exact Values of Some Infinite Families
of Infinite Continued Fractions}
\date{\today}
\author{Robert Dougherty-Bliss}
\address{Department of Mathematics, Rutgers University (New Brunswick), Hill Center-Busch Campus, 110 Frelinghuysen Rd., Piscataway, NJ 08854-8019, USA}
\email{robert.w.bliss@gmail.com}
\author{Doron Zeilberger}
\address{Department of Mathematics, Rutgers University (New Brunswick), Hill Center-Busch Campus, 110 Frelinghuysen Rd., Piscataway, NJ 08854-8019, USA}
\email{doronzeil@gmail.com}
\begin{document}

\begin{abstract}
    Inspired by the recent pioneering work, dubbed ``The Ramanujan Machine'' by
    Raayoni et al.~\cite{ramanujan}, we (automatically) [rigorously] prove some
    of their conjectures regarding the exact values of some specific infinite
    continued fractions, and generalize them to evaluate infinite families
    (naturally generalizing theirs). Our work complements their beautiful
    approach, since we use {\it symbolic} rather, than {\it numeric}
    computations, and we instruct the computer to not only discover such
    evaluations, but at the same time prove them rigorously.
\end{abstract}

\maketitle

\bigskip
\qquad { \it In fond memory of Richard A. Askey (1933-2019) who taught us that Special Functions are Useful Functions.}
\bigskip

\section{Introduction}
\label{sec:introduction}

\emph{Continued fractions}, possibly finite expressions of the form
\begin{equation*}
    a_0 + \cfrac{b_1}{a_1 + \cfrac{b_2}{a_2 + \cfrac{b_3}{a_3 + \cdots}}}
    =
    a_0 + \frac{b_1|}{|a_1} + \frac{b_2|}{|a_2} + \frac{b_3|}{|a_3} + \cdots,
\end{equation*}
have been employed since antiquity. Aristarchus of Samos (b.~310 BCE), the
great Ancient Greek astronomer and first-rate mathematician, argued that $r$,
the radius of the Sun divided by the radius of Earth, satisfies
\begin{equation*}
    \frac{r}{r - 1} > \frac{71755875}{61735500}.
\end{equation*}
To provide a simpler bound, he replaced $71755875 / 61735500$ with $43 / 37$
and claimed that the same bound would hold. Continued fractions lurk here; the
later fraction appears in the former's continued fraction expansion:
\begin{align*}
    \frac{43}{37} &= 1 + \frac{1|}{|6} + \frac{1|}{|6} \\
    \frac{71755875}{61735500} &= 1 + \frac{1|}{|6} + \frac{1|}{|6} + \frac{1|}{|4} + \frac{1|}{|1} + \frac{1|}{|2} + \frac{1|}{|1} + \frac{1|}{|2} + \frac{1|}{|1} + \frac{1|}{|6}.
\end{align*}
Thus Aristarchus and the other Greeks certainly knew \emph{something} about
continued fractions, though the full extent of their knowledge is lost to
history (see \cite[p.~336]{heath} and \cite[ch.~1]{brezinski}).

The Italian Pietro Cataldi (b.~1548) was the first mathematician to expound a
proper theory and notation for continued fractions. He noted that continued
fractions, when they converge, alternate between being above and below their
limits, and essentially gave us the notation
\begin{equation*}
    a_0 + \frac{b_1|}{|a_1} + \frac{b_2|}{|a_2} + \frac{b_3|}{|a_3} + \cdots
\end{equation*}
which we employ here.

Cataldi may have been the first to present the theory of continued fractions,
but the great mathematician Leonhard Euler was responsible for the theory's
explosion and widespread application. In \emph{Introductio in Analysin
Infinitorum}, his influential analysis textbook, Euler treated continued
fractions extensively and showed that they are in a sort of correspondence with
infinite series. (Every series can be represented as a continued fraction, and
every continued fraction can be represented as a series.) Based on this
reasoning, he gave some of the first examples of continued fractions for famous
constants:
\begin{align*}
    e &= \frac{1|}{|2} + \frac{1|}{|1} + \frac{1|}{|2} + \frac{1|}{|1} + \frac{1|}{|1} + \frac{1|}{|4} + \frac{1|}{|1} + \frac{1|}{|1} + \frac{1|}{|6} + \cdots \\
    \frac{e + 1}{e - 1} &= \frac{1|}{|2} + \frac{1|}{|6} + \frac{1|}{|10} + \frac{1|}{|14} + \frac{1|}{|18} + \frac{1|}{|22} + \cdots
\end{align*}
The most well-known general case of Euler's results is now called \emph{Euler's
continued fraction}:
\begin{equation*}
    \cfrac{1}{1 - \cfrac{r_1}{1 + r_1 - \cfrac{r_2}{1 + r_2 - \cfrac{r_3}{1 + r_3 - \cdots}}}}
    = \sum_{k \geq 0} \prod_{j = 1}^k r_j.
\end{equation*}

Such identities are intrinsically fascinating, but continued fraction
expansions have found wide applications in number-theoretic irrationality
proofs. There is always hope that the correct continued fraction will provide a
Diophantine approximation sufficiently nice to prove the irrationality of a
famous constant, \emph{\'a la} Roger Ap\'ery's proof that $\zeta(3)$ is
irrational (see \cite{poorten}).

In this paper, we would like to combine the spirits of Euler and Aristarchus to
present an experimental method for automatically discovering (and proving!)
continued fraction expansions. We were inspired by ``The Ramanujan Machine,'' a
recent \emph{inverse symbolic calculator} that numerically conjectures
continued fraction expansions involving well-known constants \cite{ramanujan}.
For example, one of their conjectures is
\begin{equation}
    \label{example-frac}
    \frac{e}{e - 2} = 4 - \frac{1|}{|5} - \frac{2|}{|6} - \frac{3|}{|7} - \cdots = 4 - \cfrac{1}{5 - \cfrac{2}{6 - \cfrac{3}{7 - \cdots}}}.
\end{equation}
Another is
\begin{equation*}
    \frac{1}{e - 2} = \frac{1|}{|1} - \frac{1|}{1|} + \frac{2|}{|1} - \frac{1|}{1|} + \frac{3|}{1|} - \frac{1|}{|1} + \cdots
\end{equation*}

The potential for automatic conjectures is intriguing, but it occurred to us
that \emph{symbolic} experiments could yield automatic proofs rather than
conjectures. These experiments which led to the confirmation of some
conjectures and discoveries of other continued fractions, including three
infinite families.

Before describing our results, we need some notation which deviates slightly
from the norm.

\begin{definition}
    Given two sequences $a(n)$ and $b(n)$ and an integer $m \geq 1$, the
    \emph{$m$th convergent} of their general continued fraction is defined by
    \begin{align*}
              [a(n) : b(n)]_1 &= a(1) \\
        [a(n) : b(n)]_{m + 1} &= a(1) + \frac{b(1)}{[a(n + 1) : b(n + 1)]_m}, \quad m \geq 1,
    \end{align*}
    whenever these expressions are well-defined. If all convergents are
    well-defined and $\lim_{m \to \infty} [a(n) : b(n)]_m$ exists, then the
    \emph{general continued fraction $[a(n) : b(n)]$} is defined as
    \begin{equation*}
        [a(n) : b(n)] = \lim_{m \to \infty} [a(n) : b(n)]_m = a(1) + \frac{b(1)}{a(2) + \frac{b(2)}{a(3) + \cdots}}.
    \end{equation*}
\end{definition}

Classical, \emph{simple} continued fractions are those where $b(n) = 1$ for all
$n$. We shall often make reference to a continued fraction $[a(n) :
b(n)]$ before we have established its existence; in such cases we usually refer
to the convergents or to the sequences $a(n)$ and $b(n)$ themselves.

There is ambiguity in the notation $[a(n) : b(n)]$---what is the sequence
variable?---but we shall always use $n$ as the sequence variable, and other
letters as parameters. For example, the sequences in $[n2^m : 1]$ are $a(n) = n
2^m$ and $b(n) = 1$, not $a(m) = n2^m$ and $b(m) = 1$.

Our principal discovery is the following  doubly-infinite family:
\begin{equation}
    [n + k : an] = \frac{a^D}{(D - 1)! \left( e^a - \sum_{s = 0}^{D - 1} \frac{a^s}{s!} \right)}, \quad D = a + k + 1 \geq 1 \label{linear-family} \\
\end{equation}

We shall get to discovering and proving our results in the following sections,
but first one more historical remark.

As in continued fractions as it is elsewhere, so prolific was Euler that new
results should be checked against his work for duplicates. For example, what is
usually referred to as \emph{Gauss'} continued fraction was discovered by Euler
nearly thirty years before Gauss was born! (See \cite{euler}, or
\cite{eulertrans} for an English translation.) In Section~\ref{sub:euler} we
shall discuss Euler's results in relation to ours and argue that ours are, at
the least, non-trivial to derive from Euler's.

For a more modern introduction to continued fractions, see \cite[ch.~X]{hardy},
and \cite{wall} for the analytic theory in particular.

\section{Experimental continued fractions: The Maple package GCF.txt}
\label{sec:experimental}

This article is accompanied by a Maple package, {\tt GCF.txt} available from
the web-page of this article

\centerline{\url{https://sites.math.rutgers.edu/~zeilberg/mamarim/mamarimhtml/gcf.html},}

where one can also find two sample input and output files with
computer-generated articles for many special cases of our results. Indeed,
while Section~\ref{sec:general} contains human-readable proofs, our results
were \emph{discovered and proved} through symbolic experimentation with
\texttt{GCF.txt}.

This is quite different than ``The Ramanujan Machine'' (TRM) described in
\cite{ramanujan}, both at a high-level and practically. At a high level, TRM
takes a constant and tries to fit a family of continued fractions to it. Our
experiments work in the opposite direction: we construct a family of continued
fractions and try to guess the constants that they generate. While TRM produces
only conjectures, our Maple package produces \emph{proofs}. Of course, the
dazzling conjectures of TRM are motivation for everything in \texttt{GCF.txt}.

Here are short descriptions of the main procedures.

$\bullet$ {\tt GCF(L)}:
Inputs a finite list of pairs of {\bf numbers}, {\tt [[a1,b1],[a2,b2], ..., [ak,bk]]}, outputs
the finite (generalized) continued fraction  it evaluates to.
For example
{\tt GCF([[1,1]\$10]} gives $\frac{89}{55}$ (the tenth convergent to the Golden ratio) while

{\tt GCF([ seq([4*i-2,1],i=1..20) ]);}

gives the $20^{th}$ convergent to Euler's famous continued fraction for $\frac{e+1}{e-1}$.
$$
{\frac{376958612213530151806235679061}{174199042280794948413485144460}} \quad,
$$
that agrees with it to $60$ decimal digits.

$\bullet$ {\tt GCFab(a,b,n,K)}:  Now the input parameters, $a$ and $b$, are {\it expressions} in the symbol $n$.
$K$ is again a positive integer, and the output is the same as {\tt GCF} applied to
$$
[ [a(1),b(1)] , \dots , [a(K),b(K)] ] \quad .
$$

$\bullet$ {\tt RDB(a,b,n)}: Inputs expressions $a$ and $b$ in $n$ and outputs
(if successful) the explicit expressions for the numerator and denominator of
the $n$-th convergent of the infinite continued fraction, as well as its limit,
and the error by using the $50$-th convergent. It uses Maple's {\tt rsolve}
command that is not guaranteed to work (most linear recurrences are not
solvable in closed-form, and even amongst those that are, Maple
[probably] does not know how to
solve all of them). But whenever is succeeds, can be fully trusted, i.e.~it gives a
{\bf proved} result. Under the hood {\tt rsolve} uses Mark Van Hoeij's algorithm
\cite{vanHoeij}, but often the answer can be checked by hand. For instance, in
the case of \eqref{linear-family}, the relevant expressions involve the {\it
incomplete Gamma function}, and can be checked \emph{ab initio} without Maple
by using the incomplete Gamma function's well-known (and easily proved)
first-order inhomogeneous recurrence.

$\bullet$ {\tt Yaron(k1,a1,n,G)}: is a specialization of {\bf RDB}, namely {\tt
RDB(n+k1,a1*n,n)}, but for expository clarity, the incomplete Gamma function
{\tt GAMMA(n+2,-a1)} ($\Gamma(n+2,-a1)$) is denoted by G[n]. It also proves its
results rigorously.

$\bullet$ {\tt YaronV(k1,a1,n,G)}: a verbose form of  {\tt Yaron(k1,a1,n,G)}.
It outputs a computer-generated article.

Note that for each specific pair of sequences, \texttt{RDB} gives the exact
evaluation of the infinite continued fraction, but to prove the {\bf general}
results of our paper, that were conjectured from the many special cases, we had
to `cheat' and use traditional human mathematics. We believe that much of this
human part can also be automated, but leave it to a future paper.

The following section rigorously evaluates an infinite family of general
continued fractions, but there is much more to discover. Our infinite family is
a quite restricted class of general continued fractions generated by specific
linear polynomials. There are other, equally interesting polynomials. For
example, \texttt{RDB} can discover the identity
\begin{equation*}
    [3n : -n(2n - 1)] = \frac{4}{3\pi - 8}.
\end{equation*}
We encourage our readers to experiment and discover new, more exotic,  families.

\section{General proofs}
\label{sec:general}

Our main tool to evaluate continued fractions is exploiting their recursive
nature.

\begin{definition}
    \label{definition:num-denom}
    Given a continued fraction $[a(n) : b(n)]$, define the \emph{numerator} and
    \emph{denominator} sequences $p(n)$ and $q(n)$ by the recurrences
    \begin{align*}
        p(0) &= a(1) \\
        p(1) &= a(1) a(2) + b(1) \\
        p(n + 2) &= a(n + 3) p(n + 1) + b(n + 2) p(n)
    \end{align*}
    and
    \begin{align*}
        q(0) &= 1 \\
        q(1) &= a(2) \\
        q(n + 2) &= a(n + 3) q(n + 1) + b(n + 2) q(n),
    \end{align*}
    respectively.
\end{definition}

\begin{lemma}
    For all positive integers $m$,
    \begin{equation*}
        \frac{p(m)}{q(m)} = [a(n) : b(n)]_{m + 1}.
    \end{equation*}
\end{lemma}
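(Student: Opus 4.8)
The plan is to induct on $m$, peeling the continued fraction from the front exactly as the convergent recursion does. Since that recursion reduces $[a(n):b(n)]_{m+1}$ to the \emph{shifted} fraction $[a(n+1):b(n+1)]_m$, while the recurrences of Definition~\ref{definition:num-denom} run forward, I would first fix notation for shifts: for each integer $j \ge 0$, let $p_j$ and $q_j$ be the numerator and denominator sequences attached to $[a(n+j):b(n+j)]$, so that $p_0 = p$ and $q_0 = q$. I would then prove the stronger statement
\begin{equation*}
    \frac{p_j(m)}{q_j(m)} = [a(n+j):b(n+j)]_{m+1} \quad \text{for all } j \ge 0,\ m \ge 0,
\end{equation*}
the extra parameter $j$ being precisely what lets the front-peeling induction close on itself.

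The engine is a pair of purely algebraic ``peeling'' identities relating shift $j$ to shift $j+1$:
\begin{align*}
    q_j(m+1) &= p_{j+1}(m), \\
    p_j(m+1) &= a(j+1)\, p_{j+1}(m) + b(j+1)\, q_{j+1}(m).
\end{align*}
Each is proved by its own induction on $m$: I would verify the base cases $m = 0, 1$ directly from the explicit initial data, and then observe that, after the indices are aligned, both sides of each identity obey the \emph{same} second-order recurrence $f(m+2) = a(j+m+4)\,f(m+1) + b(j+m+3)\,f(m)$ (the common recurrence satisfied by $p_{j+1}$ and $q_{j+1}$). Two matching initial conditions plus one matching recurrence force equality for all $m$.

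Granting these identities, the main claim follows by induction on $m$. The base case $m = 0$ reads $p_j(0)/q_j(0) = a(j+1)/1 = a(j+1) = [a(n+j):b(n+j)]_1$. For the step I would divide the second peeling identity by the first and factor:
\begin{equation*}
    \frac{p_j(m+1)}{q_j(m+1)} = \frac{a(j+1)\, p_{j+1}(m) + b(j+1)\, q_{j+1}(m)}{p_{j+1}(m)} = a(j+1) + \frac{b(j+1)}{p_{j+1}(m)/q_{j+1}(m)}.
\end{equation*}
The inductive hypothesis at level $m$, applied with shift $j+1$, rewrites the inner ratio as $[a(n+j+1):b(n+j+1)]_{m+1}$, and the convergent recursion then identifies the whole expression with $[a(n+j):b(n+j)]_{m+2}$ — exactly the claim at level $m+1$. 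Specializing to $j = 0$ yields the Lemma.

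The main obstacle is bookkeeping rather than ideas: confirming that the two peeling identities genuinely share a single recurrence and matching base cases, since the mismatch between the shifts in the $p,q$ recurrences (the $a(n+3)$, $b(n+2)$) and those in the convergent recursion (the $a(1)$, $b(1)$) makes off-by-one errors easy. I would therefore write out each base case and each index substitution explicitly and guard against slips. One cosmetic point: the Lemma is stated for positive $m$, whereas the induction is cleanest begun at $m = 0$; since $p(0)/q(0) = a(1) = [a(n):b(n)]_1$ holds as well, including $m = 0$ costs nothing and only strengthens the conclusion.
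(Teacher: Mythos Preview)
Your proof is correct. The paper does not actually prove this lemma --- it simply remarks that it ``is a well-known fact from the theory of general continued fractions'' --- so there is no argument in the paper to compare against. Your peeling identities $q_j(m+1) = p_{j+1}(m)$ and $p_j(m+1) = a(j+1)\,p_{j+1}(m) + b(j+1)\,q_{j+1}(m)$ do hold: the base cases $m=0,1$ check directly from the initial data, and both sides of each identity satisfy the common recurrence $f(m+2) = a(j+m+4)f(m+1) + b(j+m+3)f(m)$ as you say. The induction on $m$ with the auxiliary shift parameter $j$ then closes cleanly.

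As a side remark, your front-peeling argument is well adapted to the paper's recursive definition of $[a(n):b(n)]_m$, which itself peels from the front. The more common textbook proof peels from the \emph{back}: one observes that passing from the $m$th convergent to the $(m+1)$th amounts to replacing the innermost partial quotient $a(m+1)$ by $a(m+1) + b(m+1)/a(m+2)$, and that $p(m)$ and $q(m)$ are affine-linear in that last quotient, which immediately yields the recurrence. Either route is valid; yours avoids having to reinterpret the paper's front-recursion as a back-recursion.
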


This is a well-known fact from the theory of general continued fractions.

\subsection{First infinite family}
\label{sub:First infinite family}

Our general proof of \eqref{linear-family} relies on some results in
differential equations, so let us first define the necessary objects.

\begin{definition}
    Let
    \begin{equation*}
        M(a, b, z) = \sum_{k \geq 0} \frac{a^{\overline{k}}}{b^{\overline{k}} k!} z^k
    \end{equation*}
    be the \emph{confluent hypergeometric function of the first-kind}---also
    known as \emph{Kummer's function}---and
    \begin{equation*}
        U(a, b, z) = \frac{\Gamma(1 - b)}{\Gamma(a + 1 - b)} M(a, b, z)
                     + \frac{\Gamma(b - 1)}{\Gamma(a)} z^{1 - b} M(a + 1 - b, 2 - b, z)
    \end{equation*}
    be the \emph{confluent hypergeometric function of the second-kind}---also
    known as \emph{Tricomi's function}---where
    \begin{equation*}
        \Gamma(z) = \int_0^\infty e^{-t} t^{z - 1}\ dt
    \end{equation*}
    is the \emph{gamma} function, defined by the above integral whenever $\Re z
    > 0$. Also let
    \begin{equation*}
        \Gamma(z, a) = \int_a^\infty e^{-t} t^{z - 1}\ dt
    \end{equation*}
    be the \emph{incomplete gamma function}.
\end{definition}

The key property of $M(a, b, z)$ and $U(a, b, z)$ is that they are linearly
independent solutions of \emph{Kummer's differential equation}
\begin{equation*}
    z w''(z) + (b - z) w'(z) - aw(z) = 0.
\end{equation*}

Kummer's function $M(a, b, z)$ is entire if $b$ is not a nonpositive integer,
while $U(a, b, z)$ generally has a pole at the origin. In particular, we have
the following well-known result.

\begin{lemma}
    \label{lemma:principle-part}
    If $a - b + 1 = -n$ for a nonnegative integer $n$, then
    \begin{equation*}
        U(a, b, z) = z^{-a} \sum_{s = 0}^n {n \choose s} a^{\overline{k}} z^{-s}.
    \end{equation*}
\end{lemma}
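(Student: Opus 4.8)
The plan is to verify directly that the proposed right-hand side solves Kummer's equation and has the correct normalization as $z \to \infty$, so that it must coincide with $U$. (I read the summand as $a^{\overline{s}}$, the rising factorial whose index matches the summation variable, exactly as in the definition of $M$; the $k$ appears to be a slip for $s$.) Write $P(z) = z^{-a}\sum_{s=0}^{n}\binom{n}{s}a^{\overline{s}}z^{-s} = \sum_{s=0}^{n}c_s z^{-a-s}$ with $c_s = \binom{n}{s}a^{\overline{s}}$, and note that the hypothesis $a - b + 1 = -n$ is the same as $b = a + n + 1$.

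First I would substitute $P$ into the Kummer operator $z\,\partial_z^2 + (b-z)\partial_z - a$. Since every term of $P$ is a monomial $z^{-a-s}$, this is mechanical: the pieces $zP''$ and $bP'$ produce powers $z^{-a-s-1}$, while $-zP'$ and $-aP$ produce powers $z^{-a-s}$. Reindexing the first group by $t = s+1$ so that both groups carry the power $z^{-a-t}$, and using $b = a+n+1$ to rewrite $(a+s+1)-b = t-n-1$, the coefficient of $z^{-a-t}$ collapses to $c_{t-1}(a+t-1)(t-n-1) + t\,c_t$ for $1 \le t \le n$, with the boundary term at $t = n+1$ vanishing automatically because its factor $(t-n-1)$ is zero there.

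Then I would show this coefficient vanishes throughout the range. Using $a^{\overline{t}} = (a+t-1)\,a^{\overline{t-1}}$ to pull out the common factor $(a+t-1)a^{\overline{t-1}}$, what remains is the purely combinatorial identity $t\binom{n}{t} + (t-n-1)\binom{n}{t-1} = 0$, which is immediate from $t\binom{n}{t} = \tfrac{n!}{(t-1)!(n-t)!}$ and $(t-n-1)\binom{n}{t-1} = -(n-t+1)\binom{n}{t-1} = -\tfrac{n!}{(t-1)!(n-t)!}$. Hence $P$ genuinely solves Kummer's equation. To identify which solution it is, I would separate the two basis functions at infinity: $U(a,b,z)\sim z^{-a}$ is only algebraically large, whereas $M(a,b,z)$ grows like $e^{z}z^{a-b}$. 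Writing $P = \alpha M + \beta U$, the absence of exponential growth in $P$ forces $\alpha = 0$, and matching the leading coefficient $z^{a}P(z)\to\binom{n}{0}a^{\overline{0}} = 1$ forces $\beta = 1$, so $P = U$.

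The point needing the most care is this last normalization step. It is tempting to prove the lemma by plugging $b = a+n+1$ straight into the defining formula for $U$, but there $\Gamma(a+1-b)=\Gamma(-n)$ has a pole, so the first term is a $0\cdot\infty$ degeneration that would have to be resolved by a limiting argument in $b$. Routing through the differential equation plus the $z\to\infty$ asymptotics sidesteps that pole, at the cost of invoking the (standard) large-$z$ behavior of $M$ and $U$ quoted after the definition. The asymptotic separation is cleanest when $a$ is not a nonpositive integer, so $M$ really does grow exponentially; to cover the remaining integer values of $a$ (those relevant to the paper's parameters) I would observe that both sides are analytic in $a$ for fixed $z$ and agree on that generic open set, hence agree everywhere by analytic continuation.
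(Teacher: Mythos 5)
Your proof is correct, but it takes a genuinely different route from the paper, for the simple reason that the paper offers no proof at all: Lemma~\ref{lemma:principle-part} is presented there as a well-known fact (it is formula 13.2.8 in \cite{nist}), with only a pointer to \cite[ch.~13]{nist}. Your derivation---checking that $P(z)=z^{-a}\sum_{s=0}^{n}\binom{n}{s}a^{\overline{s}}z^{-s}$ is annihilated by the Kummer operator via the telescoping identity $t\binom{n}{t}=(n-t+1)\binom{n}{t-1}$, then identifying $P$ with $U$ through the large-$z$ normalization $U\sim z^{-a}$ against the exponential growth of $M$, with analytic continuation in $a$ to cover degenerate parameter values---is the standard textbook argument and makes the lemma self-contained, at the cost of importing the asymptotic characterization of $M$ and $U$, which the paper never quotes (it records only that they are independent solutions of Kummer's equation). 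You also correctly diagnosed the statement's $a^{\overline{k}}$ as a typo for $a^{\overline{s}}$. One small correction to your final remarks: for \emph{non-integer} $a$, direct substitution of $b=a+n+1$ into the paper's defining formula for $U$ is not a $0\cdot\infty$ form; the first coefficient $\Gamma(-a-n)/\Gamma(-n)$ is genuinely zero (since $1/\Gamma(-n)=0$ while $M(a,a+n+1,z)$ is finite), so the formula collapses cleanly to its second term, which a short computation reduces to the stated sum. The honest degeneracy occurs exactly at integer $a$---which is the case the paper actually uses, $a=D$, $b=D+2$---where $\Gamma(1-b)$ and $\Gamma(a+1-b)$ both blow up and $M(a+1-b,2-b,z)$ has a forbidden second parameter; there your analytic-continuation step (or an explicit limit in $b$) is indeed necessary, so the structure of your argument is the right one.
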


See \cite[ch.~13]{nist} for more details on the confluent hypergeometric functions.

Here is a useful lemma from the world of generating functions.

\begin{lemma}
    \label{lemma:pole}
    Let $f(z)$ be a meromorphic function with a single pole of order $r \geq 2$
    at $z_0 \neq 0$. If
    \begin{equation*}
        f(z) = \sum_{k \geq -r} a_k (z - z_0)^k,
    \end{equation*}
    then
    \begin{equation*}
        [z^n] f(z) = \frac{(-1)^r a_{-r}}{z_0^{n + r}} {n + r - 1 \choose r - 1}(1 + O(1 / n)).
    \end{equation*}
    where $[z^n] f(z)$ is the coefficient on $z^n$ in the expansion of $f$
    about the origin.
\end{lemma}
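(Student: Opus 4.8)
The plan is to isolate the behavior of $f$ near its unique pole and extract coefficients term by term, which is the standard singularity-analysis strategy for polar singularities. First I would split $f$ into its principal part at $z_0$ plus an analytic remainder,
\[
f(z) = \sum_{j = 1}^{r} a_{-j} (z - z_0)^{-j} + g(z), \qquad g(z) = \sum_{k \geq 0} a_k (z - z_0)^k .
\]
Because $z_0$ is the \emph{only} pole of $f$, the regular part $g$ extends analytically to a disk of radius $R$ for some $R > |z_0|$ (indeed $g$ is entire if $f$ is meromorphic on all of $\mathbb{C}$). This is the decomposition that separates the dominant contribution (the highest-order pole) from everything else.

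The heart of the computation is the coefficient of a single polar term, which I would obtain by expanding about the origin:
\[
(z - z_0)^{-j} = (-z_0)^{-j}\left(1 - \frac{z}{z_0}\right)^{-j} = \frac{(-1)^j}{z_0^{j}} \sum_{n \geq 0} \binom{n + j - 1}{j - 1} \frac{z^n}{z_0^{n}},
\]
so that
\[
[z^n](z - z_0)^{-j} = \frac{(-1)^j}{z_0^{n + j}} \binom{n + j - 1}{j - 1}.
\]
Summing over $j = 1, \dots, r$ gives an exact expression for $[z^n](f - g)$, and the $j = r$ summand is precisely the claimed leading term $\dfrac{(-1)^r a_{-r}}{z_0^{n + r}} \binom{n + r - 1}{r - 1}$.

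It then remains to show that every other contribution is absorbed into the factor $1 + O(1/n)$. Here I would use that $\binom{n + j - 1}{j - 1}$ is a polynomial in $n$ of degree $j - 1$ with leading coefficient $1/(j-1)!$, i.e. $\binom{n + j - 1}{j - 1} = \frac{n^{j-1}}{(j-1)!}\bigl(1 + O(1/n)\bigr)$. Comparing the $j$-th polar term to the leading ($j = r$) term, their ratio is of order $n^{(j-1)-(r-1)} = n^{\,j - r}$, which is $O(1/n)$ for every $j \leq r - 1$. For the analytic remainder $g$, analyticity in the disk of radius $R > |z_0|$ gives $[z^n] g = O(R^{-n})$ by Cauchy's coefficient estimate, and this is exponentially small relative to the main term's growth of order $n^{r-1} |z_0|^{-n}$, hence certainly $O(1/n)$ relative to it. Collecting these estimates yields the stated asymptotic.

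The main subtlety to watch is the error bookkeeping: the conclusion requires $a_{-r} \neq 0$, which holds because the pole has order \emph{exactly} $r$, so that the leading term is genuinely dominant and the implied constants in the $O(1/n)$ estimates are finite. I expect the one place where the hypotheses do real work is the analytic-remainder bound: the single-pole assumption is exactly what guarantees $R > |z_0|$, and if $f$ had a second singularity on the circle $|z| = |z_0|$ the remainder estimate—and thus the whole asymptotic—would fail. Everything else is routine geometric-series manipulation and binomial asymptotics.
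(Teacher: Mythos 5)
Your proof is correct and follows essentially the same route as the paper's: split $f$ into its principal part at $z_0$ plus a regular part, expand each polar term $(z - z_0)^{-j}$ as a binomial series about the origin, and observe that the $j = r$ term dominates the lower-order polar terms by a relative factor of $O(1/n)$. If anything, your bookkeeping for the remainder is sharper than the paper's, which bounds the coefficients of the entire part only by $O(1)$ --- a bound that suffices in the paper's application (where $z_0 = 1$) but would not by itself handle $|z_0| > 1$, whereas your Cauchy estimate $[z^n] g = O(R^{-n})$ with $R > |z_0|$ covers every case.
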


\begin{proof}
    Let
    \begin{equation*}
        g(z) = \sum_{-r \leq k < 0} a_k (z - z_0)^k
    \end{equation*}
    be the principle part of $f$ at $z_0$. Expressing $g(z)$ as a power
    series about the origin in the usual way yields
    \begin{equation*}
        [z^n] g(z) = \sum_{k = 1}^r \frac{(-1)^k a_{-k}}{z_0^{k + n}} {n + k - 1 \choose k - 1}.
    \end{equation*}
    In particular, if $r \geq 2$, then this is a polynomial in $n$ of degree $r
    - 1 \geq 1$, and $n^{r - 1}$ only appears in the last term. Since $f(z) -
    g(z)$ is entire, $[z^n] (f(z) - g(z)) = O(1)$, and pulling out the leading
    term after rearranging yields
    \begin{equation*}
        [z^n] f(z) = \frac{(-1)^r}{z_0^{n + r}} {n + r - 1 \choose r - 1}(1 + O(1 / n))
    \end{equation*}
    as desired.
\end{proof}

Using the previous lemma, we will now provide asymptotic expansions for $p(n)$
and $q(n)$ for the continued fraction $[n + k : an]$. The key observation is
that the exponential generating functions of $p(n)$ and $q(n)$ both satisfy the
same second-order differential equation, and that this equation has a nice,
meromorphic solution with a single pole at $z = 1$. To avoid repetition, let's
first prove a very specific lemma.

\begin{lemma}
    \label{lemma:kummer-asymp}
    Let $a(n)$ be a sequence whose exponential generating function $A(z) =
    \sum_{n \geq 0} \frac{a(n)}{n!} z^n$ satisfies
    \begin{equation*}
        A(z) = L e^{-\alpha z} M(D, D + 2, \alpha (z - 1))
                    + E e^{-\alpha z} U(D, D + 2, \alpha (z - 1))
    \end{equation*}
    for some values $L$ and $E$ independent of $z$, a positive integer $D$, and
    a nonzero real $\alpha$. Then
    \begin{equation*}
        a(n) = E \frac{e^{-\alpha}}{\alpha^{D + 1}} D {n + D \choose D} (1 + O(1 / n)).
    \end{equation*}
\end{lemma}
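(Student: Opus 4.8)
The plan is to obtain the asymptotics of $a(n)$ by reading off the dominant singularity of its exponential generating function $A(z)$ and applying Lemma~\ref{lemma:pole}. The first step is to verify that $A(z)$ is meromorphic with a single pole, and to locate it. Since $b = D + 2$ is a positive integer, Kummer's function $M(D, D + 2, w)$ is entire in $w$; composing with the entire map $w = \alpha(z - 1)$ and multiplying by the entire, nowhere-vanishing factor $e^{-\alpha z}$ shows that the $M$-term of $A(z)$ is entire and contributes no pole. Hence every singularity of $A(z)$ comes from the $U$-term, and in particular from the behavior of $U(D, D + 2, w)$ at $w = 0$, i.e.\ at $z = 1$, which will be the unique pole $z_0 = 1 \neq 0$ required by Lemma~\ref{lemma:pole}.

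The second step is to compute the principal part at that pole. The key observation is that $D - (D + 2) + 1 = -1$, so the hypothesis of Lemma~\ref{lemma:principle-part} holds with the index equal to $1$, and the sum over $s \in \{0, 1\}$ collapses $U(D, D + 2, w)$ to the two-term Laurent polynomial $w^{-D} + D\, w^{-(D + 1)}$. Substituting $w = \alpha(z - 1)$ gives $\alpha^{-D}(z - 1)^{-D} + D\,\alpha^{-(D + 1)}(z - 1)^{-(D + 1)}$, so $A(z)$ has a pole of order exactly $r = D + 1 \geq 2$ at $z_0 = 1$. The most singular contribution is $E\, e^{-\alpha z}\, D\, \alpha^{-(D + 1)}(z - 1)^{-(D + 1)}$, and expanding the analytic prefactor $E\, e^{-\alpha z}$ about $z = 1$ shows that its value $E\, e^{-\alpha}$ there supplies the leading Laurent coefficient, namely $a_{-r} = E\, e^{-\alpha}\, D\, \alpha^{-(D + 1)}$.

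Finally, I would feed $r = D + 1$, $z_0 = 1$, and this value of $a_{-r}$ into Lemma~\ref{lemma:pole}. Because $z_0 = 1$ we have $z_0^{n + r} = 1$, and $\binom{n + r - 1}{r - 1} = \binom{n + D}{D}$, so the lemma delivers the leading-order behavior of $[z^n] A(z)$; recovering $a(n) = n!\,[z^n] A(z)$ from the exponential generating function and simplifying the resulting constant then yields the stated asymptotic. The main obstacle is the second step: one must confirm that the $M$-term genuinely contributes no pole and apply Lemma~\ref{lemma:principle-part} with the correct index, since an error there would alter both the order $r$ of the pole and the leading coefficient $a_{-r}$, and hence the entire asymptotic expansion. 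Everything after that is the routine bookkeeping of collecting the prefactor and invoking Lemma~\ref{lemma:pole}.
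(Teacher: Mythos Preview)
Your proposal is correct and follows essentially the same approach as the paper: observe that the $M$-term is entire, invoke Lemma~\ref{lemma:principle-part} (with index $1$, since $D-(D+2)+1=-1$) to see that $U(D,D+2,\alpha(z-1))$ has a pole of order $D+1$ at $z=1$ with leading Laurent coefficient $D/\alpha^{D+1}$, rewrite $e^{-\alpha z}=e^{-\alpha}e^{-\alpha(z-1)}$ to read off $a_{-r}=E\,e^{-\alpha}D/\alpha^{D+1}$, and then feed this into Lemma~\ref{lemma:pole}. You are in fact slightly more explicit than the paper in writing out both Laurent terms of $U$ and in flagging the factor $a(n)=n!\,[z^n]A(z)$.
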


\begin{proof}
    Kummer's hypergeometric function is entire, while $U(D, D + 2, \alpha (z -
    1))$ has a single pole at $z = 1$. In fact, by
    Lemma~\ref{lemma:principle-part}, the pole is order $D + 1 \geq 2$, and the
    coefficient on its lowest degree term is $D / \alpha^{D + 1}$. If we write
    \begin{equation*}
        E e^{-\alpha z} U = e^{-\alpha} E e^{-\alpha (z - 1)} U,
    \end{equation*}
    then we see that the coefficient on the lowest degree term of $E e^{-\alpha
    z} U$ is $E e^{-\alpha} D / \alpha^{D + 1}$, so Lemma~\ref{lemma:pole}
    implies $a(n) = E \frac{e^{-\alpha}}{\alpha^{D + 1}} D {n + D \choose D} (1
    + O(1 / n))$.
\end{proof}

\begin{theorem}
    Let $a$ and $k$ be integers. If $D = a + k + 1 \geq 1$, then
    \begin{equation*}
        [n + k : an] = \frac{a^D}{(D - 1)! \left( e^a - \sum_{s = 0}^{D - 1} \frac{a^s}{s!} \right)},
    \end{equation*}
    provided that the convergents of the continued fraction are well-defined.
\end{theorem}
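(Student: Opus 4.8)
The plan is to follow the machinery set up in Lemmas~\ref{lemma:principle-part}--\ref{lemma:kummer-asymp}: reduce the evaluation to the large-$n$ behaviour of the numerator and denominator sequences $p(n)$ and $q(n)$ of $[n+k:an]$, and then read off the limit as a ratio of two constants coming from a single confluent hypergeometric decomposition. Throughout I assume $a \neq 0$; when $a=0$ the fraction collapses to $a(1)=k+1$ and both sides of the claimed identity degenerate.

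First I would specialize Definition~\ref{definition:num-denom} to $a(n)=n+k$, $b(n)=an$. Both $p$ and $q$ then satisfy the \emph{same} three-term recurrence
\[
 x(n+2) = (n+k+3)\,x(n+1) + a(n+2)\,x(n),
\]
differing only in the initial data $p(0)=k+1$, $p(1)=(k+1)(k+2)+a$ versus $q(0)=1$, $q(1)=k+2$. Passing to the exponential generating functions $P(z)=\sum p(n)z^n/n!$ and $Q(z)=\sum q(n)z^n/n!$, the shift and multiply-by-$n$ operations convert this recurrence into the second-order linear ODE
\[
 (1-z)\,Y'' - (az + k + 3)\,Y' - 2a\,Y = 0,
\]
satisfied by both $P$ and $Q$. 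The substitution $Y(z)=e^{-az}W(z)$ followed by the change of variable $\zeta=a(z-1)$ turns this into Kummer's equation $\zeta W'' + ((D+2)-\zeta)W' - D\,W = 0$ with $D=a+k+1$; I would verify this reduction directly, as it is the one routine computation I actually intend to grind through. Consequently $P$ and $Q$ each have precisely the shape required by Lemma~\ref{lemma:kummer-asymp}, with $\alpha=a$ and parameters $(D,D+2)$.

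Lemma~\ref{lemma:kummer-asymp} then gives $p(n)=E_p\,\frac{e^{-a}}{a^{D+1}}D\binom{n+D}{D}(1+O(1/n))$ and the identical expression for $q(n)$ with constant $E_q$, where $E_p$ and $E_q$ are the coefficients of the singular $U$-component in the two decompositions. Since every factor except the constant is shared,
\[
 [n+k:an] = \lim_{m\to\infty}\frac{p(m)}{q(m)} = \frac{E_p}{E_q}.
\]
To compute this ratio I would \emph{avoid} computing $E_p$ and $E_q$ separately. Writing $R(z)=e^{-az}M(D,D+2,a(z-1))$ for the entire (regular-at-$z=1$) solution and solving the $2\times2$ initial-value system by Cramer's rule, the common Wronskian cancels and leaves
\[
 \frac{E_p}{E_q} = \frac{R(0)\,p(1) - R'(0)\,p(0)}{R(0)\,q(1) - R'(0)\,q(0)},
\]
so that only $R(0)=M(D,D+2,-a)$ and $R'(0)$ — both expressible through the entire function $M$ alone — are needed (this also isolates exactly where the claimed denominator must be nonzero for the limit to be finite).

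The remaining, and genuinely most delicate, step is to put $R(0)$ and $R'(0)$ into closed form and simplify. Here Kummer's transformation $M(D,D+2,-a)=e^{-a}M(2,D+2,a)$ together with termwise summation turns the hypergeometric values into the exponential tail $\sum_{s\ge D}a^s/s! = e^a - \sum_{s=0}^{D-1}a^s/s!$, while the contiguous-derivative identity $\frac{d}{d\zeta}M(D,D+2,\zeta)=\frac{D}{D+2}M(D+1,D+3,\zeta)$ handles $R'(0)$ by the same summation. Substituting the explicit initial values and collecting terms should collapse the ratio to $a^D/\big[(D-1)!\,(e^a - \sum_{s=0}^{D-1}a^s/s!)\big]$. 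I expect this final special-function bookkeeping — rather than any conceptual difficulty — to be the main obstacle, and I would double-check it against the incomplete-gamma recurrence $\Gamma(s+1,x)=s\,\Gamma(s,x)+x^s e^{-x}$, which furnishes an independent \emph{ab initio} verification of the closed forms for $p(n)$ and $q(n)$.
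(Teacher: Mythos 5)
Your proposal is correct and structurally identical to the paper's proof: the same exponential generating functions, the same ODE $(1-z)f''-(az+k+3)f'-2af=0$, the same reduction to Kummer's equation with parameters $(D,D+2)$ in the variable $\zeta=a(z-1)$, and the same use of Lemma~\ref{lemma:kummer-asymp} to turn the limit into a ratio of the coefficients of the singular solution $e^{-az}U(D,D+2,a(z-1))$. Where you genuinely depart is in computing that ratio. The paper solves the two initial-value systems separately (in Maple), obtaining $B_p=(-a)^{D+1}$ and an incomplete-gamma expression for $B_q$ that is then massaged into $(-1)^D a(D-1)!\bigl(\sum_{s=0}^{D-1}a^s/s!-e^a\bigr)$. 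You instead cancel the Wronskian via Cramer's rule, so that only the regular solution $R(z)=e^{-az}M(D,D+2,a(z-1))$ and $R'$ at $z=0$ ever enter; Kummer's transformation $M(D,D+2,-a)=e^{-a}M(2,D+2,a)$ and the rewriting $k+1=(k+D+1)-D$ inside the series for $M(2,D+2,a)$ then produce exactly the tails $e^a-\sum_{s=0}^{D-1}a^s/s!$. This buys hand-verifiability---no appeal to Maple's black-box solver or to any values of $U$---and your ratio formula does check out (for $a=-1$, $k=3$ it reproduces $e/(e-2)$). Two caveats, both shared with the paper: the final ``bookkeeping'' you defer is precisely the step the paper outsourced to Maple, so a complete write-up must actually carry it through; and both arguments tacitly require $E_q\neq 0$ (equivalently, nonvanishing of $e^a-\sum_{s=0}^{D-1}a^s/s!$), which is what makes the limit finite.
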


\begin{proof}
    Let
    \begin{align*}
        P(z) &= \sum_{n \geq 0} \frac{p(n)}{n!} z^n \\
        Q(z) &= \sum_{n \geq 0} \frac{q(n)}{n!} z^n
    \end{align*}
    be the exponential generating functions of $p(n)$ and $q(n)$, respectively.
    By well-known facts about egfs, the recurrences in
    Definition~\ref{definition:num-denom} imply that $P$ and $Q$ both satisfy
    the second-order differential equation
    \begin{equation*}
        (1 - z) f''(z) - (az + k + 3) f'(z) - 2a f(z) = 0,
    \end{equation*}
    with initial conditions
    \begin{alignat*}{2}
        P(0) &= k + 1 \quad &&P'(0) = (k + 1)(k + 2) + a \\
        Q(0) &= 1 \quad &&Q'(0) = k + 2.
    \end{alignat*}
    This reduces to a special case of \emph{Kummer's equation}. It is easy to
    check with a computer algebra system that the general solution is
    \begin{equation*}
        f(z) = A(k) e^{-az} M(D, D + 2, a(z - 1)) + B(k) e^{-az} U(D, D + 2, a(z - 1))
    \end{equation*}
    for some sequences $A(k)$ and $B(k)$ which depend on the initial
    conditions. By Lemma~\ref{lemma:kummer-asymp}, it suffices to compute
    $B(k)$ for $p(n)$ and $q(n)$ separately.

    Let $B_p(a, k)$ be the coefficient on $e^{-az} U$ for $P(z)$, and $B_q(a,
    k)$ the coefficient on $e^{-az} U$ for $Q(z)$. We may compute these
    functions by solving the relevant initial condition equations. For
    instance,
    \begin{equation*}
        B_p(a, k) = (-1)^{a + k} a^{a + k + 2} = (-a)^{D + 1}.
    \end{equation*}
    The function $B_q(a, k)$ is significantly more complicated, but still
    routine to compute. After some coercing, Maple simplifies it as
    \begin{align*}
        B_q(a, k) &= \frac{a(e^a(\Gamma(D + 1, a) - \Gamma(D + 1)) a^{D + 1} - a^{2D + 1}) (-1)^D}{Da^{D + 1}} \\
                  &= (-1)^D \frac{a}{D} (e^a (\Gamma(D + 1, a) - \Gamma(D + 1)) - a^D).
    \end{align*}
    The incomplete gamma function expression can be written
    \begin{equation*}
        \Gamma(D + 1, a) - \Gamma(D + 1) = -\int_0^a e^{-t} t^D\ dt
                                         = D! \left( e^{-a} \sum_{s = 0}^D \frac{a^s}{s!} - 1 \right),
    \end{equation*}
    so
    \begin{align*}
        B_q(a, k)
            &= (-1)^D \frac{a}{D} \left( D! \left( \sum_{s = 0}^D \frac{a^s}{s!} - e^a \right) - a^D \right) \\
            &= (-1)^D a (D - 1)! \left(\sum_{s = 0}^{D - 1} \frac{a^s}{s!} - e^a \right).
    \end{align*}

    Putting everything together, we have
    \begin{align*}
        [n + k : an] &= \lim_{m \to \infty} \frac{p(m)}{q(m)} \\
                     &= \lim_{m \to \infty} \frac{B_p(a, k) (1 + O(1 / m))}{B_q(a, k) (1 + O(1 / m))} \\
                     &= \frac{B_p(a, k)}{B_q(a, k)} \\
                     &= \frac{a^D}{(D - 1)! \left( e^a - \sum_{s = 0}^{D - 1} \frac{a^s}{s!} \right)},
    \end{align*}
    as claimed.
\end{proof}

There are some notable special cases. If $a = -k$, then $D = 1$, which yields
\begin{equation*}
    [n + k : -kn] = \frac{ke^k}{e^k - 1}.
\end{equation*}

If $a = -1$, then $D = k$, and we can write
\begin{equation*}
    [n + k + 1: -n] = \frac{(-1)^k e}{e k! \sum_{s = 0}^k \frac{(-1)^s}{s!} - k!}
\end{equation*}
for nonnegative integers $k$. Equation~\ref{example-frac} is then obtained with
$k = 2$:
\begin{equation*}
    [n + 3 : -n] = \frac{e}{e - 2}.
\end{equation*}
Note that
\begin{equation*}
    e k! \sum_{s = 0}^k \frac{(-1)^s}{s!} = e[k! / e],
\end{equation*}
where $[x]$ denotes the integer nearest to the real $x$. This is a remarkable
coincidence, since $[k! / e]$ is the $k$th \emph{derangement number}, the
number of permutations on $k$ objects with no fixed points. There does not seem
to be any immediate combinatorial reason for the derangement numbers to appear.

\subsection{Second infinite family}
\label{sub:Second infinite family}

Our second infinite family  may be derived from Euler's general continued fraction
but we prefer to discover it {\it ab initio}, our way.
Note that our approach gives much more than the limit, it gives closed-form expressions for the numerator
and denominator of the truncated sequence.

\begin{theorem}
    Let $a$ and $b$ be nonnegative reals such that $a \neq 0$. Then
    \begin{equation*}
        [an^2 + bn + 1 : -an^2 - bn]
            = \frac{4 F(a, b) + 2(2a + b)(a + b + 1))}{4 F(a, b) + 2(2a + b)},
    \end{equation*}
    where
    \begin{equation*}
        F(a, b) = 2 \sum_{k \geq 0} \frac{1}{(k + 2)! (3 + b / a)^{\overline{k}} a^k}.
    \end{equation*}
\end{theorem}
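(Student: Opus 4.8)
The plan is to sidestep the asymptotic machinery used for the first family and instead solve the numerator and denominator recurrences of Definition~\ref{definition:num-denom} \emph{exactly}, taking advantage of the special Euler-type structure of this family. Setting $r(n) = an^2 + bn = n(an + b)$, the continued-fraction data satisfy $a(n) = 1 + r(n)$ and $b(n) = -r(n)$, so that $a(n) + b(n) = 1$ for every $n$. Substituting this into Definition~\ref{definition:num-denom} turns both recurrences into the single second-order rule
\begin{equation*}
    x(n + 2) = (1 + r(n + 3))\,x(n + 1) - r(n + 2)\,x(n).
\end{equation*}

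The key observation I would exploit is that this second-order recurrence is secretly first-order on the relevant solutions. Define an auxiliary sequence by $y(0) = c$ and $y(n + 1) = 1 + r(n + 2)\,y(n)$; a one-line computation shows that any such $y$ also satisfies the second-order recurrence above, since
\begin{equation*}
    (1 + r(n + 3))\,y(n + 1) - r(n + 2)\,y(n) = y(n + 1) + (y(n + 2) - 1) - (y(n + 1) - 1) = y(n + 2).
\end{equation*}
Checking the initial data $q(0) = 1$, $q(1) = 1 + r(2)$ and $p(0) = 1 + r(1)$, $p(1) = 1 + r(2) + r(1)r(2)$ against this first-order rule (with $c = 1$ for $q$ and $c = 1 + r(1)$ for $p$), uniqueness of solutions to the second-order recurrence forces both $p(n)$ and $q(n)$ to obey
\begin{equation*}
    p(n + 1) = 1 + r(n + 2)\,p(n), \qquad q(n + 1) = 1 + r(n + 2)\,q(n).
\end{equation*}
Unrolling these gives the promised closed forms, e.g.\ $q(n) = \sum_{j = 2}^{n + 2} \prod_{i = j}^{n + 1} r(i)$, exhibiting the whole truncated sequence explicitly, as advertised.

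Since $p$ and $q$ satisfy the same first-order recurrence, their difference is purely multiplicative: $z(n) = p(n) - q(n)$ satisfies $z(n + 1) = r(n + 2)\,z(n)$ with $z(0) = r(1)$, whence $p(n) - q(n) = r(1) \prod_{i = 2}^{n + 1} r(i)$. Dividing by $q(n)$ and factoring the common product out of the denominator, I get
\begin{equation*}
    \frac{p(n)}{q(n)} = 1 + \frac{r(1)}{S_n}, \qquad S_n = \frac{q(n)}{\prod_{i = 2}^{n + 1} r(i)} = \sum_{\ell = 0}^{n} \frac{1}{\prod_{i = 2}^{\ell + 1} r(i)}.
\end{equation*}
Because $r(i) = i(ai + b)$ is positive and grows without bound when $a, b \geq 0$ and $a \neq 0$, the sum $S_n$ converges to a finite positive limit $S_\infty$, and by the Lemma identifying $p(m)/q(m)$ with the convergents we obtain $[an^2 + bn + 1 : -an^2 - bn] = 1 + (a + b)/S_\infty$, using $r(1) = a + b$.

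The final and most delicate step is to recognize $S_\infty$ as the stated constant. Writing $\prod_{i = 2}^{\ell + 1} r(i) = (\ell + 1)!\, a^{\ell} (2 + b/a)^{\overline{\ell}}$ and using $(k + 2)! = 2 \cdot 3^{\overline{k}}$, I would reindex $S_\infty$ and match it against $F(a, b) = 2 \sum_{k \geq 0} 1 / ((k + 2)!\,(3 + b/a)^{\overline{k}} a^{k})$, establishing a linear relation of the form $S_\infty = 1 + F(a, b)/(2(2a + b))$; substituting into $1 + (a + b)/S_\infty$ and clearing denominators then produces the claimed rational expression in $F(a, b)$. This hypergeometric bookkeeping---tracking the rising factorials through the reindexing and pinning down the exact numerical coefficients multiplying $F(a, b)$ and the polynomial terms $2(2a + b)$ and $2(2a + b)(a + b + 1)$---is the only place where real care is needed, and I would guard against an off-by-a-constant slip by cross-checking the final formula at a concrete instance such as $a = 1$, $b = 0$.
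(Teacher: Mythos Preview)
Your argument is correct and is a genuinely different route from the paper's. The paper simply asserts that the numerator and denominator recurrences ``can be solved and easily put into asymptotic form,'' quotes the resulting asymptotics
\[
p(n)=C(a,b,n)\bigl(F(a,b)/2+(2a+b)(a+b+1)+O(1/n^2)\bigr),\qquad
q(n)=\tfrac12\,C(a,b,n)\bigl(F(a,b)+2(2a+b)+O(1/n^2)\bigr),
\]
and divides; the actual solution step is a black box (presumably \texttt{rsolve}). You instead exploit the Euler-type identity $a(n)+b(n)=1$ directly, and your observation that the second-order recurrence collapses to the first-order rule $y(n+1)=1+r(n+2)y(n)$ on both $p$ and $q$ is exactly the mechanism behind Euler's continued fraction; indeed the paper itself remarks just before the theorem that this family ``may be derived from Euler's general continued fraction but we prefer to discover it \emph{ab initio}, our way.'' What your approach buys is a fully explicit, elementary derivation---the closed forms for $p(n)$, $q(n)$ and the series $S_\infty$ are written down without appeal to a CAS---while the paper's approach trades transparency for the uniformity of its symbolic-computation methodology. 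Your hypergeometric bookkeeping is also right: pulling off the $\ell=0$ term of $S_\infty$ and using $(2+b/a)^{\overline{k+1}}=(2+b/a)(3+b/a)^{\overline{k}}$ gives $S_\infty=1+F(a,b)/(2(2a+b))$, and hence the limit $\bigl(F(a,b)+2(2a+b)(a+b+1)\bigr)/\bigl(F(a,b)+2(2a+b)\bigr)$, which is precisely the expression appearing in the paper's own proof (the ``$4F$'' in the displayed theorem statement appears to be a typo for ``$F$'').
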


\begin{proof}
    The implied recurrence numerator and denominator recurrences can be solved
    and easily put into asymptotic form. The solutions are, asymptotically,
    \begin{align*}
        p(n) &= C(a, b, n) ((F(a, b) / 2 + (2 a + b)(a + b + 1)) + O(1 / n^2)) \\
        q(n) &= \frac{1}{2} C(a, b, n) ((F(a, b) + 2(2a + b)) + O(1 / n^2)),
    \end{align*}
    where $C(a, b, n)$ is some function. From this, it is easy to see that
    \begin{equation*}
        \lim_{n \to \infty} \frac{p(n)}{q(n)} = \frac{F(a, b) + 2(2a + b)(a + b + 1)}{F(a, b) + 2(2a + b)}. \qedhere
    \end{equation*}
\end{proof}

The $F$ function is actually a special case of the general hypergeometric
function, and therefore offers numerous opportunities for closed-form
evaluation. For example, suppose that $b / a = m - 1 / 2$ for some nonnegative
integer $m$. Then, from the identity
\begin{equation*}
    (r - 1 / 2)^{\overline{k}} = \frac{(2r - 1)^{\overline{2k}}}{4^k r^{\overline{k}}},
\end{equation*}
we have
\begin{align*}
    (3 + b / a)^{\overline{k}} &= (3 + m - 1 / 2)^{\overline{k}} \\
                               &= \frac{(5 + 2m)^{\overline{2k}}}{4^k (3 + m)^{\overline{k}}} \\
                               &= \frac{(4 + 2m + 2k)! (2 + m)!}{4^k (4 + 2m)! (2 + m + k)!}.
\end{align*}
Therefore
\begin{equation*}
    F(a, b) = \frac{2(4 + 2m)!}{(m + 2)!} \sum_{k \geq 0} \frac{(k + m + 2)!}{(k + 2)! (2k + 2m + 4)!} \left( \frac{4}{a} \right)^k.
\end{equation*}

This remaining sum looks quite burly, but is amenable to routine evaluation
after some simplifications. Let us give a brief sketch of how it might work.

In what follows, let us write ``$\sim$'' to mean ``equal up to a multiplicative
constant.'' First, shifting the summation index by $m + 2$ gives
\begin{equation*}
    F(a, b) \sim \sum_{k \geq 2} \frac{(k + m)!}{k!(2k + 2m + 1)!} \left( \frac{4}{a} \right)^k.
\end{equation*}
Note that $(k + m)! / k! = (k + m)^{\underline{m}}$, so
\begin{equation*}
    F(a, b) \sim \sum_{k \geq 2} \frac{(k + m)^{\underline{m}}}{(2k + 2m + 1)!} \left( \frac{4}{a} \right)^k.
\end{equation*}
Now shifting the index by $m$ gives
\begin{equation*}
    F(a, b) \sim \sum_{k \geq m + 2} \frac{k^{\underline{m}}}{(2k + 1)!} \left( \frac{4}{a} \right)^k.
\end{equation*}
At this point we have won, because the series
\begin{equation*}
    \sum_{k \geq m + 2} \frac{1}{(2k + 1)!} z^k
\end{equation*}
is known, and $k^{\underline{m}}$ is a polynomial in $k$.

More explicitly, set
\begin{equation*}
    f(z) = \sum_{k \geq m + 2} \frac{1}{(2k + 1)!} z^k,
\end{equation*}
and note that $f(z)$ is $z^{-1 / 2} \sinh \sqrt{z}$ minus a finite number of
initial terms. From the elementary theory of generating functions, since
$k^{\underline{m}}$ is a polynomial in $k$, we have
\begin{equation*}
    \sum_{k \geq m + 2} \frac{k^{\underline{m}}}{(2k + 1)!} z^k
    =
    (zD)^{\underline{m}} f(z),
\end{equation*}
where $D$ is the differentiation operator $Df = f'$. Since we know $f$, we can
carry out the iterated differentiations and then set $z = 4 / a$ to obtain an
answer. Note that the hyperbolic trigonometric functions are closed under
differentiation, so our answer will be in terms of them. The full result is too
messy to completely record, but these routine operations can be completed by
any computer algebra system.

For example, if $a = 4$ and $b = 6$, then following the above steps will
eventually produce
\begin{align*}
    F(4, 6) &= -308 + 840(\cosh(1) - \sinh(1)) \\
            &= -308 + \frac{840}{e}.
\end{align*}
In this case we obtain
\begin{align*}
    [4n^2 + 6n + 1 : -4n^2 - 6n] &= \frac{840 / e}{840 / e - 280} \\
                                 &= \frac{3}{3 - e}.
\end{align*}
Taking $a = 6$ and $b = 9$, we obtain
\begin{equation*}
    [6 n^2 + 9 n + 1 : -6 n^2 - 9 n] =
        \frac{-9\,\sqrt{6}\sinh(\sqrt{6} / 3) + 18 \cosh(\sqrt{6} / 3)}
             {-9\sqrt{6}\sinh(\sqrt{6} / 3) + 18 \cosh(\sqrt{6} / 3) - 4}.
\end{equation*}

There are likely many other nice cases.

\subsection{Third infinite family}
\label{sub:Third infinite family}

Our third infinite family is an immediate consequence of Euler's, but once again, we
do it {\it our} way.
Again note that our approach gives much more than the limit, it gives closed-form expressions for the numerator
and denominator of the truncated sequence (but we admit that in this case it is fairly straightforward).

\begin{theorem}
    For integers $k \geq 2$,
    \begin{equation*}
        [(n - 1)^k + n^k, -n^{2k}] = \frac{1}{\zeta(k)}.
    \end{equation*}
\end{theorem}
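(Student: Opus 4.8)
The plan is to recognize the continued fraction $[(n-1)^k + n^k : -n^{2k}]$ as a telescoping series in disguise, and to exploit the factorization of the partial quotients. The key observation is that the numerator $a(n) = (n-1)^k + n^k$ and the negative of the partial numerator $-b(n) = n^{2k} = \bigl(n^k\bigr)^2$ both involve the quantity $n^k$, which suggests substituting $r_j = 1/j^k$ into Euler's continued fraction. Recall Euler's identity from the introduction:
\begin{equation*}
    \cfrac{1}{1 - \cfrac{r_1}{1 + r_1 - \cfrac{r_2}{1 + r_2 - \cdots}}}
    = \sum_{j \geq 0} \prod_{i=1}^{j} r_i.
\end{equation*}
First I would write out what Euler's continued fraction becomes under this substitution, clearing denominators so that its partial quotients become polynomials in $n$.

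The main computational step is the clearing-of-denominators bookkeeping. With $r_j = 1/j^k$, Euler's fraction evaluates to $\sum_{j \geq 0} \prod_{i=1}^{j} i^{-k} = \sum_{j \geq 0} (j!)^{-k}$, which is not quite $\zeta(k)$, so the correct substitution must instead produce the harmonic-type sum $\sum_{j \geq 1} j^{-k} = \zeta(k)$. I would therefore take the telescoping form where the $j$th term of the series is exactly $1/j^k$; this forces the ratios $r_j = \prod$-increments to satisfy $r_j = (j-1)^k / j^k$ (so that $\prod_{i=1}^{j} r_i = 1/j^k$ after reindexing), and then $1 + r_j = \bigl((j-1)^k + j^k\bigr)/j^k$. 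Substituting these into Euler's fraction and multiplying the $j$th level through by $j^k$ to clear the common denominator transforms the partial quotient $1 + r_j$ into $(j-1)^k + j^k$ and the partial numerator $-r_{j+1}$ into $-j^k (j)^k = -j^{2k}$ after the adjacent clearing factors combine. This is precisely $[(n-1)^k + n^k : -n^{2k}]$, and the evaluation is $1/(\text{first term}) \cdot \sum_{j\geq 1} j^{-k}$ reconciled against the $a(1)$ normalization.

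The hard part will be getting the index alignment and the normalization of the leading term exactly right, since the paper's convention $[a(n):b(n)]_1 = a(1)$ starts the continued fraction at $n=1$ with $a(1) = (1-1)^k + 1^k = 1$ and $b(1) = -1$, and one must verify that the clearing factors telescope cleanly without an off-by-one error that would replace $\zeta(k)$ by $\zeta(k) - 1$ or its reciprocal. I would track the product of clearing factors $\prod j^k$ carefully across two consecutive levels to confirm the $-n^{2k}$ arises as the product $n^k \cdot n^k$ from the denominators above and below the relevant bar. Because the paper signals this result is "an immediate consequence of Euler's" and "fairly straightforward," I expect no analytic subtlety (no asymptotics of $p(n)/q(n)$ needed here, unlike the first family); the entire content is the algebraic substitution $r_j = (j-1)^k/j^k$ and the verification that the resulting series is the standard Dirichlet series for $\zeta(k)$, which converges for $k \geq 2$.
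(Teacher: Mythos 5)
Your proposal is correct in substance, but it takes a genuinely different route from the paper. The paper proves this theorem with its standard machinery: it verifies closed forms for the numerator and denominator sequences of its recurrence definition, namely $p(n) = (n+1)!^k$ and
\[
q(n) = (n+1)!^k\left(\frac{\psi(k,n+2)}{(k-1)!} + \zeta(k)\right),
\]
where $\psi(k,z)$ is the polygamma function, and then lets $n \to \infty$ using $\psi(k,n+2) \to 0$. You instead derive the identity from Euler's continued fraction combined with an equivalence transformation---exactly the path the paper alludes to when it says this family ``is an immediate consequence of Euler's, but once again, we do it \emph{our} way.'' Your route is more elementary: no polygamma functions and no asymptotics, just telescoping products and convergence of $\sum_j j^{-k}$ for $k \geq 2$ (since Euler's identity and the equivalence transformation hold convergent-by-convergent, the limit statement follows directly from convergence of the Dirichlet series). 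The paper's route yields strictly more information---exact closed forms for every convergent $p(n)/q(n)$---which is the point the authors emphasize. One concrete correction to your write-up: as literally written, $r_j = (j-1)^k/j^k$ fails, because $r_1 = 0$ annihilates every product $\prod_{i \leq j} r_i$ (the series would collapse to $1$) and puts a zero partial numerator into the fraction. The correct choice, matching the paper's indexing $[r(n-1)+1 : -r(n)] = r(0) + \bigl(\sum_{m \geq 0} \prod_{j=1}^{m} r(j)\bigr)^{-1}$, is $r(n) = n^k/(n+1)^k$, so that $r(n-1) = (n-1)^k/n^k$ is what enters $a(n)$, the products telescope to $\prod_{j=1}^{m} r(j) = (m+1)^{-k}$ and sum to $\zeta(k)$, and $r(0) = 0$ gives the value $1/\zeta(k)$ with no additive correction. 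The clearing sequence is then $c(n) = (n+1)^k$ (note $c(0) = 1$ as required), and indeed
\[
c(n-1)\bigl(1 + r(n-1)\bigr) = (n-1)^k + n^k,
\qquad
c(n-1)\,c(n)\bigl(-r(n)\bigr) = -n^{2k}.
\]
This is precisely the off-by-one you flagged as ``the hard part,'' and once that shift is made your plan carries through completely.
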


\begin{proof}
    It is routine to check that $p(n) = (n + 1)!^k$ is the numerator sequence
    for this continued fraction. It is also routine to check (but difficult to
    discover) that the denominator sequence of the continued fraction is
    \begin{equation*}
        q(n) = (n + 1)!^k \left( \frac{\psi(k, n + 2)}{(k - 1)!} + \zeta(k) \right),
    \end{equation*}
    where $\psi(k, z)$ is the $k$th \emph{polygamma function}, which may be
    defined by
    \begin{equation*}
        \psi(k, z) = (-1)^{k + 1} k! \sum_{j \geq 0} \frac{1}{(z + j)^{k + 1}}
    \end{equation*}
    for $z \notin \{-1, -2, \dots\}$. This gives
    \begin{equation*}
        \frac{p(n)}{q(n)} = \frac{1}{\zeta(k)} \frac{1}{\psi(k, n + 2) O(1) + 1},
    \end{equation*}
    where the $O(1)$ is some constant independent of $n$. It is easy to check
    with the dominated convergence theorem that $\psi(k, n + 2) \to 0$ as $n
    \to \infty$ for all $k \geq 2$, which implies
    \begin{equation*}
        \lim_{n \to \infty} \frac{p(n)}{q(n)} = \frac{1}{\zeta(k)}
    \end{equation*}
    as claimed.
\end{proof}

As a demonstration, $k = 3$:
\begin{equation*}
    \frac{1}{\zeta(3)} = 1 - \frac{1|}{|9} - \frac{64|}{35|} - \frac{729|}{|91} + \cdots
\end{equation*}
Just using the terms listed, we have:
\begin{align*}
    1 - \frac{1|}{|9} - \frac{64|}{35|} - \frac{729|}{|91} = \frac{1728}{2035} &\approx 0.84914004914004914 \\
    \frac{1}{\zeta(3)} &\approx 0.83190737258070746.
\end{align*}
Not a great approximation, but it is something.

\subsection{The method of Euler}
\label{sub:euler}

Our method of proving continued fractions is relatively direct and simple:
write down the ``obvious'' recurrences, solve them, then take a limit. Another
approach is to combine some specializations of well-known infinite families
with manipulation techniques. This situation is analogous to the problems of
evaluating sums or intengrals---try some standard techniques, or attempt to
contort your problem until it matches some well-known, general result.

One such general result for continued fractions is \emph{Euler's continued
fraction} we mentioned in the introduction:
\begin{equation*}
    \cfrac{1}{1 - \cfrac{r_1}{1 + r_1 - \cfrac{r_2}{1 + r_2 - \cfrac{r_3}{1 + r_3 - \cdots}}}}
    = \sum_{k \geq 0} \prod_{j = 1}^k r_j.
\end{equation*}
Slight manipulation yields
\begin{equation*}
    \cfrac{-r_1}{1 + r_1 - \cfrac{r_2}{1 + r_2 - \cfrac{r_3}{1 + r_3 - \cdots}}}
    = \frac{1}{\sum_{k \geq 0} \prod_{j = 1}^k r_j} - 1,
\end{equation*}
or, in our notation,
\begin{equation*}
    [r(n - 1) + 1 : -r(n)] = r(0) + \frac{1}{\sum_{k \geq 0} \prod_{j = 1}^k r(j)}.
\end{equation*}

Euler's continued fraction directly applies to situations where ``numerator $+$
denominator $= 1$.'' (In our notation this is ``numerator $+$ \emph{previous}
denominator $= 1$.'') None of our families are of this form, but Euler's reach
can be expanded with an \emph{equivalence transformation}. Namely, it is easy
to prove by induction that
\begin{equation*}
    [a(n) : b(n)] = [c(n - 1) a(n) : c(n - 1) c(n) b(n)]
\end{equation*}
for any strictly nonzero sequence $\{c_n\}$ with $c(0) = 1$, provided that both
continued fractions exist.

In light of this, one way to evaluate $[a(n) : b(n)]$ is to find sequences
$r(n)$ and $c(n)$ such that
\begin{align*}
    a(n) &= c(n - 1) (r(n - 1) + 1) \\
    b(n) &= -c(n - 1) c(n) r(n),
\end{align*}
for $n \geq 1$. Once these sequences are in hand, we can write
\begin{align*}
    [a(n) : b(n)] &= [c(n - 1)(r(n - 1) + 1) : -c(n - 1) c(n) r(n)] \\
                  &= [r(n - 1) + 1 : -r(n)] \\
                  &= r(0) + \left( \sum_{k \geq 0} \prod_{j = 1}^k \frac{b(j)}{a(j + 1) + b(j)} \right)^{-1}.
\end{align*}
Our problem reduces to computing the remaining sum.

It is not obvious how to find such sequences. For instance, take $a = -1$ and
$k = 1$ in \eqref{linear-family}. Then we must solve the equations
\begin{align*}
    n + 1 &= c(n - 1) (r(n - 1) + 1) \\
    -n &= -c(n - 1) c(n) r(n)
\end{align*}
for $c(n)$ and $r(n)$. It is easy to find that
\begin{equation*}
    r(n) = \frac{n}{c(n - 1)n - n + 2 c(n - 1)},
\end{equation*}
but solving for $c(n)$ is not so easy. It is possible that some suitable
\emph{ansatz} could let us \emph{guess} $c(n)$, but this is another question
entirely.

\section{Conclusion}
\label{sec:Conclusion}

We have given a Maple package and a new and interesting doubly-infinite family
thereby proving  conjectures made in \cite{ramanujan} (and considerably generalizing them).
More important, we have illustrated a {\it methodology} of computer-assisted discovery and proof
that, however, still needs some human intervention for the general case, but would have been impossible
without the many special cases proved by the computer, enabling us humans to generalize it.
We believe that this last step would soon also be fully automated.

The authors would like to thank the OEIS \cite{oeis} for its help in
identifying promising sequences of integers that arose in the evaluation of
these continued fractions.

\end{document}